\newcommand{\bydef}{:=}
\newcommand{\cA}{\mathcal{A}}
\newcommand{\cB}{\mathcal{B}}
\newcommand{\cC}{\mathcal{C}}
\newcommand{\cI}{\mathcal{I}}
\newcommand{\cJ}{\mathcal{J}}
\newcommand{\cK}{\mathcal{K}}
\newcommand{\cQ}{\mathcal{Q}}
\newcommand{\cS}{\mathcal{S}}
\newcommand{\cU}{\mathcal{U}}
\newcommand{\cV}{\mathcal{V}}
\newcommand{\frg}{{\mathfrak g}}
\newcommand{\ZZ}{\mathbb{Z}}
\newcommand{\RR}{\mathbb{R}}
\newcommand{\CC}{\mathbb{C}}
\newcommand{\FF}{\mathbb{F}}
\newcommand{\chr}[1]{\mathrm{char}\,#1}
\DeclareMathOperator{\Aut}{\mathrm{Aut}}
\DeclareMathOperator{\Der}{\mathrm{Der}}
\DeclareMathOperator{\LocDer}{\mathrm{LocDer}}
\DeclareMathOperator{\LocAut}{\mathrm{LocAut}}
\DeclareMathOperator{\twoLocAut}{\mathrm{2LocAut}}
\newcommand{\frso}{{\mathfrak{so}}}
\newcommand{\Ort}{\mathrm{O}}
\newcommand{\SP}{\mathrm{Sp}}
\numberwithin{equation}{section}
\newcommand{\nup}{\textup{n}}
\newtheorem{theorem}{Theorem}[section]
\newtheorem*{proposition*}{Proposition}
\newtheorem{lemma}[theorem]{Lemma}
\theoremstyle{definition}
\newtheorem{problems}[theorem]{Problems}
\theoremstyle{remark} 
\begin{document}

\title[Local and $2$-local automorphisms of Cayley algebras]%
{Local and $2$-local automorphisms\\ of Cayley algebras}



\author[Sh. A. Ayupov]{Shavkat Ayupov$^{1,2}$}
\address{$^{1}$V.I.Romanovskiy Institute of Mathematics\\
	Uzbekistan Academy of Sciences\\ University street, 9, Olmazor district, Tashkent, 100174, Uzbekistan}
\address{$^2$National University of Uzbekistan \\
	4, Olmazor district, Tashkent, 100174, Uzbekistan}
\email{\textcolor[rgb]{0.00,0.00,0.84}{shavkat.ayupov@mathinst.uz}}

\author[A.Elduque]{Alberto Elduque$^3$}
\address{$^{3}$Departamento de
Matem\'{a}ticas e Instituto Universitario de Matem\'aticas y
Aplicaciones, Universidad de Zaragoza, 50009 Zaragoza, Spain}
\email{\textcolor[rgb]{0.00,0.00,0.84}{elduque@unizar.es}}
\thanks{${}^3$%
Supported by grants MTM2017-83506-C2-1-P (AEI/FEDER, UE) and E22\_17R
(Gobierno de Arag\'on, Grupo de referencia ``\'Algebra y
Geometr{\'\i}a'', cofunded by Feder 2014-2020 ``Construyendo Europa desde Arag\'on'')}

\author[K. Kudaybergenov]{Karimbergen Kudaybergenov$^{1,4}$}
\address{$^4$Department of Mathematics\\
	Karakalpak State University\\
	1, Ch. Abdirov,   Nukus, 230112, Uzbekistan}
\email{\textcolor[rgb]{0.00,0.00,0.84}{karim2006@mail.ru}}


\subjclass[2010]{Primary 16W20; Secondary  17A36}

\keywords{Cayley algebra, autmorphism, local autmorphism, $2$-local autmorphism}

\date{\today}

\begin{abstract}
	The present paper is devoted to the description  of local and $2$-local automorphisms on Cayley algebras over an arbitrary field $\FF$.
	Given a Cayley  algebra $\cC$
	with  norm $\nup$, let  $\Ort(\cC,\nup)$ be the corresponding orthogonal group.
	We prove that the group of all local automorphisms of $\cC$ coincides with the group $\{\varphi\in\Ort(\cC,\nup)\mid \varphi(1)=1\}.$
	Further we prove that the behavior of $2$-local
		automorphisms  depends on the Cayley algebra being split or division. Every
	$2$-local automorphism on the split Cayley algebra is an automorphism, i.e. they form the exceptional Lie group  $G_2(\FF)$ if
	$\textrm{char}\FF\neq 2,3$.
	On the other hand, on  division Cayley algebras over a  field $\FF$,  the groups of $2$-local automorphisms and local automorphisms coincide, and they are isomorphic to  the group $\{\varphi\in\Ort(\cC,\nup)\mid \varphi(1)=1\}.$
\end{abstract}


\maketitle

\section{Introduction}

Let $\cA$ be an   algebra (not necessary associative). Recall that a linear bijection
$\varphi: \cA\to \cA$ is said to be an automorphism, if $\varphi(xy)=\varphi(x)\varphi(y)$ for all $x, y\in \cA.$
A  linear mapping  $\psi$ is said to be a
local  automorphism, if for every $x\in \cA$ there exists an automorphism $\varphi_x$ on $\cA$ (depending on $x$) such that $\psi(x)=\varphi_x(x).$

The latter notion was  introduced  by D.~R.~Larson and A.~R.~Sourour~\cite{Larson90}.
They  proved that if $\cA=B(X),$ the algebra of all bounded linear operators on a Banach space $X,$ then every invertible  local automorphism of $\cA$ is  an automorphism. Thus automorphisms on $B(X)$ are completely determined by their local actions. In \cite[Lemma 4]{E11} it was shown that the set of all local automorphisms $\LocAut(\cA)$ of an algebra $\cA$ form a multiplicative group.

In \cite{BS93} M.~Bresar and P.~\v{S}emrl proved that linear mappings of matrix algebras which preserve idempotents are Jordan homomorphisms. Applying this result the authors  obtained some results concerning local derivations and local automorphisms. As an another application,  they give a complete description of all weakly continuous linear surjective mappings on standard operator algebras which preserve projections. In 2007, M. Bresar \cite{B07} proposed more general methods to study local automorphisms on associative rings,   and in certain rings containing non-central idempotents  he characterized homomorphisms,
derivations and multipliers by their actions on elements satisfying some special conditions. He considered the condition that an additive map $h$ between rings $\cA$
and $\cB$ satisfies $h(x)h(y)h(z)=0$ whenever $x,y,z\in \cA$
are such that $xy=yz=0.$
As an application, he proved some new and interesting results on local derivations and local multipliers.

In~\cite{AK18} it was proved that every local
automorphism on the special linear  Lie algebra $\mathfrak{sl}_n$ is an
automorphism or anti-automorphism. Further M.~Constantini \cite{C19}   extended  the above result to an arbitrary simple  Lie algebra $\mathfrak{g}.$
These results can be reformulate as follows
$
\LocAut(\mathfrak{g})=\Aut(\mathfrak{g})\ltimes \{\textrm{id}_\mathfrak{g}, -\textrm{id}_\mathfrak{g}\},
$
where $\textrm{id}_\mathfrak{g}$ is the identical automorphism of $\mathfrak{g}.$

In 1997, P.~\v{S}emrl \cite{Semrl97} introduced the concept of 2-local automorphism.
Recall that a mapping $\Delta:\cA \to \cA$ (not necessary linear) is said to be a $2$-local automorphism, if for every pair $x,y\in \cA$ there exists an  automorphism $\varphi_{x,y}:\cA \to \cA$ (depending on $x,y$) such that
$\Delta(x)=\varphi_{x,y}(x),$ $\Delta(y)=\varphi_{x,y}(y).$
In \cite{Semrl97} P. \v{S}emrl described 2-local automorphisms  on the algebra $B(H)$  of all bounded linear operators on the infinite-dimensional separable Hilbert space $H,$  by proving that every 2-local  automorphism on $B(H)$  is an  automorphism.

In \cite{AEK} we have proved that the space of all local derivations of
a Cayley algebra $\cC$ coincides with the Lie algebra
$\{d\in\frso(\cC,\nup)\mid d(1)=0\}$ which is isomorphic to the  orthogonal Lie algebra
$\frso(\cC_0,\nup)$ (even if the characteristic of $\FF$ is $2$!).
Further it was shown that the structure of  $2$-local
derivations depends on the Cayley algebra being split or division. Every
$2$-local derivation on the split Cayley algebra is a derivation, i.e. the set of 2-local derivations coincides with the exceptional Lie algebra $\frg_2(\FF)$ if
$\textrm{char}\FF\neq 2,3$.
On the other hand, on  division Cayley algebras over a  field $\FF$,  the sets of $2$-local derivations and local derivations coincide, and they are isomorphic to  the Lie algebra $\frso(\cC_0,\nup)$.
In \cite{AKA} general forms of local automorphisms were obtained
on a Cayley algebra $\cC$ over a field $\FF$ of characteristic zero and  for $2$-local automorphisms on $\cC$
over an algebraically closed field $\FF$ of characteristic zero.

In the present paper we shall give   a complete description  of local and $2$-local automorphisms on Cayley algebras over an arbitrary field $\FF$.

\medskip

\section{Cayley algebras}

Let $\FF$  be an arbitrary field. Cayley  (or octonion) algebras  over $\FF$
constitute a well-known class
of nonassociative algebras. They are unital nonassociative
algebras $\cC$  of dimension eight over $\FF$,
endowed with a nonsingular quadratic multiplicative form (the norm)
$\nup : \cC \to  \FF$. Hence
$$
\nup(xy)=\nup(x)\nup(y)
$$
for all  $x, y \in  \mathcal{C}$, and the polar form
$$
\nup(x, y)\bydef\nup(x+y)-\nup(x)-\nup(y)
$$
is a nondegenerate bilinear form. (The norm and its associated polarization will be denoted by the same letter).

Any element in a Cayley algebra $\cC$ satisfies the degree $2$ equation:
\begin{align}\label{xsqn}
	x^2-\nup(x,1)x+\nup(x)1=0.
\end{align}
The map $x \to  \overline{x}=\nup(x, 1)1-x$ is an involution
and the
trace $t(x)=\nup(x, 1)$ and norm $\nup(x)$ are given by respectively, $t(x)1=x+\overline{x}$ and
$\nup(x)1 = x\overline{x}=\overline{x}x$ for all $x \in \cC$.

Note that two Cayley algebras $\mathcal{C}_1$ and $\mathcal{C}_2$, with respective norms $\nup_1$
and $\nup_2$, are isomorphic if and only if the norms  $\nup_1$ and $\nup_2$ are
isometric (see \cite[Corollary 4.7]{EKmon}).  It is necessary to remind that Cayley
algebras are alternative, that is, each subalgebra generated by any two elements
	is associative.

From \eqref{xsqn}, we get
\begin{align*}
	xy+yx-\nup(x,1)y-\nup(y,1)x +\nup(x,y)1=0,
\end{align*}
for all $x,y\in\cC$.

Recall that the  norm $\nup$  is isotropic if there is a non zero element $x\in \cC$ with $\nup(x)=0$, otherwise it is called anisotropic.
Note that any Cayley algebra with anisotropic norm is a division algebra.

It is known that, up to isomorphism, there is a unique Cayley algebra whose
norm is isotropic. It is called the split Cayley algebra. The split Cayley
$\cC$ admits a \emph{canonical basis} $\{e_1, e_2, u_1, u_2, u_3, v_1, v_2, v_3\}$.  The multiplication table in this basis is given in following Table
	\ref{table:1} (see \cite[\S 4.1]{EKmon} or  \cite[Section 2]{AEK}):
\begin{table}[h!]
	\caption{}
	\label{table:1}\vspace*{-6pt}
	\centering
	\begin{tabular}{ | c | c c | c c c | c c c |}
		\hline
		& $e_1$ & $e_2$ & $u_1$ & $u_2$ & $u_3$ & $v_1$ & $v_2$ & $v_3$ \\
		\hline
		$e_1$ & $e_1$ & 0  & $u_1$ & $u_2$ & $u_3$ & 0 & 0 & 0 \\
		$e_2$ & 0 & $e_2$  & 0 & 0 &  0 &  $v_1$ & $v_2$ & $v_3$ \\
		\hline
		$u_1$ & 0  & $u_1$ & 0 & $v_3$ & $-v_2$  & $-e_1$ & 0 & 0 \\
		$u_2$ & 0 & $u_2$  & $-v_3$ & 0 & $v_1$ & 0 & $-e_1$ & 0 \\
		$u_3$ & 0 & $u_3$  & $v_2$ & $-v_1$ & 0 & 0 & 0 & $-e_1$ \\
		\hline
		$v_1$ & $v_1$ & 0  & $-e_2$ & 0 & 0 & 0 & $u_3$  & $-u_2$ \\
		$v_2$ & $v_2$ & 0  & 0 & $-e_2$ & 0 & $-u_3$ & 0 & $u_1$ \\
		$v_3$ & $v_3$ & 0  & 0 & 0     & $-e_2$ & $u_2$ & $-u_1$ & 0  \\
		\hline
	\end{tabular}
\end{table}

Recall also that $\cK=\FF e_1+ \FF e_2$, which is isomorphic to $\FF\times \FF$, is the split Hurwitz
algebra of dimension $2$, and with $\cU = \FF u_1+\FF u_2+\FF u_3$ and $\cV = \FF v_1+\FF v_2+\FF v_3$,
the decomposition $\cC=\cK\oplus \cU\oplus \cV$ is a $\ZZ_3$-grading:
$\cC_{\overline{0}}=\cK, \, \cC_{\overline{1}}=\cU,\, \cC_{\overline{2}}=\cV$.

Let $\cC$ be a Cayley (or octonion) algebra over an arbitrary field $\FF$.
Note that any automorphism  $\varphi$ on $\mathcal{C}$ satisfies  $\varphi(1)=1,$  it leaves
invariant the subspace of traceless octonions $\cC_0=\{x \in \cC: \nup(x,1)=0\}$ and
\begin{align}\label{autsym}
\nup(\varphi(x))=\nup(x),\,\,\, \nup(\varphi(x), \varphi(y))=\nup(x, y)
\end{align}
for all $x, y\in \cC$.

\medskip

\section{Local automorphisms}

The following result is proved in \cite[Lemma 3.3]{AEK}. Recall that the trace of an
element $x$ of a Cayley algebra $\cC$ with norm $\nup$ is
$\nup(x,1)$.

\begin{lemma}\label{le:orbits}
	Let $\cC$ be a Cayley  over a field $\FF$ with the norm $\nup$. Any two elements of
	$\cC\setminus \FF 1$ are conjugate under $\Aut\cC$ if and only if they have the same norm and the same trace.
\end{lemma}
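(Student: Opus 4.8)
The plan is to prove Lemma~\ref{le:orbits} by a direct analysis of the action of $\Aut\cC$ on $\cC\setminus\FF 1$, using the degree~$2$ equation~\eqref{xsqn} as the organizing principle. The ``only if'' direction is immediate: any automorphism $\vphi$ fixes $1$, preserves the norm and its polarization by~\eqref{autsym}, and hence preserves the trace $\nup(x,1)$; so conjugate elements automatically share norm and trace. The real content is the ``if'' direction, and the key observation is that by~\eqref{xsqn} an element $x\in\cC\setminus\FF 1$ satisfies a fixed monic quadratic over $\FF$ determined precisely by its trace $\tau=\nup(x,1)$ and norm $\nu=\nup(x)$, namely $X^2-\tau X+\nu$. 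Thus two elements of $\cC\setminus\FF 1$ with the same norm and trace are both roots of the same irreducible-or-reducible quadratic, and the subalgebra each generates together with $1$ is the same quadratic \'etale or local algebra $\FF[X]/(X^2-\tau X+\nu)$.

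From here the strategy is to reduce to $\cC_0$ and exploit the norm form. First I would normalize: writing $x=\frac{\tau}{2}1+x'$ when $\chr\FF\neq 2$ (and handling $\chr\FF=2$ separately, or uniformly by working directly with the quadratic relation), one sees that conjugating $x$ to $y$ under $\Aut\cC$ is equivalent to conjugating the traceless parts $x',y'\in\cC_0$, which now have equal norm. So the heart of the matter becomes: any two elements of $\cC_0\setminus\{0\}$ with the same norm are conjugate under $\Aut\cC$. The tool for this is Witt's extension theorem for the nondegenerate quadratic space $(\cC,\nup)$: an isometry defined on a subspace extends to a global isometry. The subtlety is that we need an \emph{automorphism}, not merely an isometry, so one must arrange the isometry to fix $1$ and to respect the multiplicative structure.

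The main obstacle, and where I would spend the most care, is upgrading an isometry to an algebra automorphism. The bridge is the classical fact that for Cayley algebras the automorphism group acts with large orbits: an isometry of $(\cC,\nup)$ fixing $1$ need not be multiplicative, but one can build the desired automorphism by choosing compatible \emph{bases}. Concretely, I would take $x',y'\in\cC_0$ with $\nup(x')=\nup(y')=\nu$ and attempt to extend the partial map $x'\mapsto y'$ to an isomorphism of Cayley algebras by selecting generators: choose elements completing $\{1,x'\}$ to a standard generating set (a composition-algebra ``doubling'' basis via the Cayley--Dickson construction), map them to a corresponding set built from $y'$, and verify the multiplication is preserved using the orthogonality and norm conditions that Witt's theorem guarantees can be matched. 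The delicate cases are when $x'$ is isotropic versus anisotropic and when $\chr\FF=2$, where the polar form and the squaring map behave differently; in the isotropic (split) case one can move $x'$ into the explicit canonical basis of Table~\ref{table:1} and read off an automorphism directly, whereas in the anisotropic case $\FF[x']$ is a quadratic field extension and one invokes the transitivity of $\Aut\cC$ on elements generating isomorphic quadratic subalgebras of fixed norm. Assembling these cases gives transitivity of $\Aut\cC$ on each fixed norm-and-trace locus in $\cC\setminus\FF 1$, completing the proof.
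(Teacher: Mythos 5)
First, a remark on the comparison itself: the paper does not prove Lemma~\ref{le:orbits}; it quotes it from \cite[Lemma 3.3]{AEK}, so the benchmark is the standard argument (essentially \cite[\S 1.7]{SpringerVeldkamp}) that the cited proof follows. Your outline matches that argument in the generic case: when $\FF 1+\FF x$ is a composition subalgebra of $\cC$ (equivalently, when the quadratic $X^2-\tau X+\nu$ is separable), the assignment $1\mapsto 1$, $x\mapsto y$ is an isomorphism of two-dimensional composition subalgebras, and Witt's theorem together with the Cayley--Dickson doubling process (\cite[Corollary 1.7.3]{SpringerVeldkamp}) upgrades it to an automorphism of $\cC$. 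That part of your plan is sound.

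The gap lies in the two degenerate cases, which is exactly where the lemma has content beyond the classical extension theorem, and your text handles them only by assertion. (a) If $\chr\FF\neq 2$ and the traceless part $x'$ is isotropic (which forces $\cC$ to be split), then $\FF 1+\FF x'$ is \emph{not} a composition subalgebra: $x'$ lies in the radical of the restricted polar form and $x'^2=0$, so ``choosing a doubling basis starting from $\{1,x'\}$'' is not available, and ``invoking the transitivity of $\Aut\cC$ on elements generating isomorphic quadratic subalgebras of fixed norm'' is circular --- that transitivity \emph{is} the statement to be proved. What is needed here is the explicit completion of a nonzero traceless isotropic vector to a canonical basis as in Table~\ref{table:1}, followed by a diagonal automorphism to adjust the remaining scalar, and none of this is carried out. (b) If $\chr\FF=2$ and $\nup(x,1)=0$, the normalization $x=\frac{\tau}{2}1+x'$ is unavailable and $\FF 1+\FF x$ is totally singular for the polar form; here one must argue as in the proof of Theorem~\ref{th:2localautos_division_not2}: extend $1\mapsto 1$, $x\mapsto y$ to an isometry of $(\cC,\nup)$ by Witt's Extension Theorem, use it to transport an auxiliary element $u$ with $\nup(u,1)=1$ and $\nup(u,x)=0$, and only then run the doubling process on $\cK=\FF 1+\FF u$ and $\cQ=\cK\oplus\cK x$. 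Your proposal flags both situations as ``delicate'' but supplies no argument for either, so as written it establishes the lemma only for elements $x$ whose minimal quadratic is separable.
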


Let $\LocAut(\cC)$ be the set of local automorphisms of a Cayley algebra $\cC$ over a field $\FF$. Let $\nup$ be the norm on $\cC$. Denote by $\Ort(\cC,\nup)$ the corresponding orthogonal group.

\begin{theorem}\label{th:local_autos}
Let $\cC$ be a Cayley algebra  over a field $\FF$ with norm $\nup$. Then the set 
$\LocAut(\cC)$ coincides with
$\{\varphi\in\Ort(\cC,\nup)\mid \varphi(1)=1\}$. 
\end{theorem}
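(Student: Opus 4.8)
The plan is to prove the two inclusions separately.

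For the inclusion $\LocAut(\cC)\subseteq\{\varphi\in\Ort(\cC,\nup)\mid\varphi(1)=1\}$, let $\psi$ be a local automorphism. First I would observe that $\psi(1)=1$: by definition there is an automorphism $\varphi_1$ with $\psi(1)=\varphi_1(1)$, and every automorphism fixes the unit. Next, for each $x\in\cC$ there is an automorphism $\varphi_x$ with $\psi(x)=\varphi_x(x)$, whence $\nup(\psi(x))=\nup(\varphi_x(x))=\nup(x)$ by \eqref{autsym}. Thus the linear map $\psi$ preserves the quadratic form $\nup$ pointwise. Polarizing and using linearity, $\nup(\psi(x),\psi(y))=\nup(\psi(x+y))-\nup(\psi(x))-\nup(\psi(y))=\nup(x,y)$, so $\psi$ also preserves the polar form. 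Since the polar form is nondegenerate and $\cC$ is finite-dimensional, $\psi$ is injective, hence bijective, and therefore $\psi\in\Ort(\cC,\nup)$ with $\psi(1)=1$.

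For the reverse inclusion, let $\varphi\in\Ort(\cC,\nup)$ with $\varphi(1)=1$, and fix $x\in\cC$; I must produce an automorphism agreeing with $\varphi$ at $x$. If $x\in\FF 1$ then $\varphi(x)=x$ and the identity works. Assume $x\notin\FF 1$. Because $\varphi$ is orthogonal and fixes $1$, it preserves both the norm and the trace: $\nup(\varphi(x))=\nup(x)$ and $\nup(\varphi(x),1)=\nup(\varphi(x),\varphi(1))=\nup(x,1)$. Moreover $\varphi(\FF 1)=\FF 1$, so the injectivity of $\varphi$ forces $\varphi(x)\notin\FF 1$. Hence $x$ and $\varphi(x)$ are two elements of $\cC\setminus\FF 1$ sharing the same norm and the same trace, and Lemma~\ref{le:orbits} yields an automorphism $\phi_x\in\Aut\cC$ with $\phi_x(x)=\varphi(x)$. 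As $x$ was arbitrary, $\varphi$ is a local automorphism.

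I expect no serious obstacle: the essential content is packaged in Lemma~\ref{le:orbits}, which converts the pointwise problem into the statement that elements outside $\FF 1$ with equal norm and trace are $\Aut\cC$-conjugate. The only points requiring care are (i) that pointwise preservation of the quadratic form upgrades to a genuine isometry of the polar form, which holds in every characteristic by the polarization identity together with the stated nondegeneracy of $\nup(\cdot,\cdot)$, and (ii) that $\varphi(x)$ remains outside $\FF 1$, which is immediate from $\varphi(1)=1$ and bijectivity. In particular no case distinction between the split and division cases is needed at this stage.
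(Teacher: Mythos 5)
Your proof is correct and follows essentially the same route as the paper: the forward inclusion comes from automorphisms fixing $1$ and preserving $\nup$, and the reverse inclusion reduces to Lemma~\ref{le:orbits} via preservation of norm and trace. The extra details you supply (polarization to get the bilinear isometry, injectivity from nondegeneracy, and the check that $\varphi(x)\notin\FF 1$) are points the paper leaves implicit, and they are handled correctly.
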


\begin{proof}
Any local automorphism fixes the unity $1$ of $\cC$ and preserves the norm of any element, because  so does any automorphism (see \eqref{autsym}).

Conversely, given an orthogonal transformation
$\varphi\in\Ort(\cC,\nup)$ with $\varphi(1)=1$, and given any element
$x\in\cC\setminus\FF 1$ we have $\nup\bigl(\varphi(x)\bigr)=\nup(x)$, and
$\nup\bigl(\varphi(x),1\bigr)=\nup\bigl(\varphi(x),\varphi(1)\bigr)
=\nup(x,1)$. Hence $\varphi(x)$ and $x$ have the same norm and trace, and the result follows from Lemma \ref{le:orbits}.
\end{proof}


If $\chr\FF\neq 2$, then $\{\varphi\in\Ort(\cC,\nup)\mid \varphi(1)=1\}$ is naturally isomorphic to the orthogonal group
$\Ort(\cC_0,\nup)$, where $\cC_0$ is the subspace of trace zero elements, that is, the orthogonal subspace to $\FF 1$.

However, if $\chr\FF=2$ there is the natural
group homomorphism
\[
\Phi:\{\varphi\in\Ort(\cC,\nup)\mid \varphi(1)=1\}\rightarrow\Ort(\cC_0,\nup)
\]
obtained by restriction. Take an element $a\in\cC$ with $\nup(a,1)=1$. Then
$\cK=\FF 1+\FF a$ is a composition subalgebra of $\cC$. Write $W=\cK^\perp$, so that
$\cC_0=\FF 1\oplus W$. The kernel of $\Phi$ consists of those elements
$\varphi\in\Ort(\cC,\nup)$ such that $\varphi(1)=1$ and $\varphi(w)=w$ for all
$w\in W$. Then $\varphi(a)$ is orthogonal to $\varphi(W)=W$, and
$\nup\bigl(\varphi(a),1\bigr)=\nup\bigl(\varphi(a),\varphi(1)\bigr)=\nup(a,1)=1$.
Hence $\varphi(a)=a+\mu 1$ for some $\mu\in\FF$, and since $a$ and $\varphi(a)$
have the same norm, we must have $\mu+\mu^2=1$, so either $\mu$ is $0$ or $1$.
In other words, the kernel of $\Phi$ is cyclic of order $2$. Moreover,
$\Phi$ is not surjective in general.

Actually,
as $\{x\in\cC_0\mid \nup(x,\cC_0)=0\}=\FF 1$, any element
$\phi$ in $\Ort(\cC_0,\nup)$ fixes $1$ and takes any element $w\in W$ to an element
of the form $\alpha(w)1+\sigma(w)$, for a linear map
$\alpha:W\rightarrow\FF$, and
an element $\sigma$ in the symplectic group $\SP(W,\nup)$ of $W$ relative to the alternating
bilinear form given by the polarization $\nup(.,.)$. Moreover, $\sigma$ and $\alpha$
are related by the condition
$\nup(w)-\nup\bigl(\sigma(w)\bigr)=\alpha(w)^2$ for all
$w\in W$, so $\alpha$ is determined by $\sigma$. The nondegeneracy of $\nup$ gives a unique element $w_\sigma\in W$ such that
$\alpha(w)=\nup(w_\sigma,w)1$ for all $w\in W$.

If $\FF$ is perfect, then given any $\sigma\in\SP(W,\nup)$, the map
$\gamma:w\to\nup(w)-\nup\bigl(\sigma(w)\bigr)$ is additive and `semilinear': $\gamma(\mu w)=\mu^2\gamma(w)$ for all $\mu\in\FF$.
Recall that by the definition of  a perfect field, $\mu\to\mu^2$ is an automorphism (the Frobenius automorphism) of $\FF$. It follows that, for a perfect $\FF$, the map
$\gamma$ is always of the form $\alpha(w)^2$ for a linear form
$\alpha$ and, therefore, the map $\phi\to\sigma$ gives a
well-known group isomorphism
$\Ort(\cC_0,\nup)\rightarrow \SP(W,\nup)$. (All this is valid for
odd-dimensional regular quadratic forms).

Moreover, for an arbitrary field $\FF$ of characteristic $2$, take 
$\phi\in\Ort(\cC_0,\nup)$, and $\sigma$, $w_\sigma$
as above: $\phi(w)=\sigma(w)+\nup(w_\sigma,w)1$
and $\nup(w)-\nup\bigl(\sigma(w)\bigr)=\nup(w_\sigma,w)^2$
for all $w\in W$. If $\phi$ is extended to an element of
$\Ort(\cC,\nup)$, then $\nup\bigl(\phi(a),1\bigr)=\nup(a,1)=1$, so 
$\phi(a)=\mu 1+a+w_a$
for a scalar $\mu\in \FF$ and an element $w_a\in W$.
From $\nup\bigl(\phi(a)\bigr)=\nup(a)$ we get $\nup(w_a)=\mu^2+\mu$, and from
\begin{align*}
0 & =\nup(a,w)=\nup\bigl(\phi(a),\phi(w)\bigr)
=\nup\bigl(\mu 1+a+w_a,\sigma(w)+\nup(w_\sigma,w)1\bigr)\\
 & =  \nup(w_\sigma,w)+\nup\bigl(w_a,\sigma(w)\bigr)
 =\nup\bigl(\sigma(w_\sigma),\sigma(w)\bigr)+\nup\bigl(w_a,\sigma(w)\bigr)\\
 &  =\nup\bigl(\sigma(w_\sigma)+w_a,\sigma(w)\bigr)
\end{align*}
for all $w\in W$, we get $w_a=\sigma(w_\sigma)$. Now we have 
\[
\mu^2+\mu=\nup(w_a)=\nup\bigl(\sigma(w_\sigma)\bigr)=
\nup(w_\sigma)+\nup(w_\sigma,w_\sigma)^2=\nup(w_\sigma).
\]
Therefore, the obstruction for $\phi\in\Ort(\cC_0,\nup)$ to extend 
to an element in $\Ort(\cC,\nup)$ is that $\nup(w_\sigma)$ must belong to
$\{\mu+\mu^2\mid \mu\in\FF\}$. In particular, $\Phi$ is surjective
if $\FF$ is algebraically closed.

\smallskip

The situation for Lie algebras is easier: $\{d\in\frso(\cC,\nup)\mid d(1)=0\}$ is always isomorphic to $\frso(\cC_0,\nup)$ (see \cite[Remark 3.2]{AEK}).

\medskip

\section{$2$-local automorphisms}

In order to deal with $2$-local automorphisms, we need a slight variation of the proof of \cite[Lemma 3.2]{AKA}.

\begin{lemma}\label{le:2local_linear}
Let $\cC$ be a Cayley  over a field $\FF$ with norm $\nup$. Then any
$2$-local automorphism of $\cC$ is linear.
\end{lemma}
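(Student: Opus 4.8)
The plan is to show that a $2$-local automorphism $\Delta$ is additive and homogeneous by exploiting the fact that, for \emph{any} pair $x,y$, a single automorphism $\varphi_{x,y}$ simultaneously interpolates $\Delta$ at both points, together with the rigidity furnished by the orbit description in Lemma~\ref{le:orbits}. First I would record the elementary consequences of the defining property evaluated at convenient pairs. Taking $y=\lambda x$ for a scalar $\lambda\in\FF$, the automorphism $\varphi_{x,\lambda x}$ gives $\Delta(x)=\varphi_{x,\lambda x}(x)$ and $\Delta(\lambda x)=\varphi_{x,\lambda x}(\lambda x)=\lambda\varphi_{x,\lambda x}(x)=\lambda\Delta(x)$, so homogeneity is immediate; in particular $\Delta(1)=1$ and $\Delta$ fixes scalars. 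Moreover, since each $\varphi_{x,y}$ is an automorphism, \eqref{autsym} shows that $\Delta$ preserves the norm and trace of every element: $\nup\bigl(\Delta(x)\bigr)=\nup(x)$ and $\nup\bigl(\Delta(x),1\bigr)=\nup(x,1)$ for all $x$.

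The substance is additivity, $\Delta(x+y)=\Delta(x)+\Delta(y)$. The key observation is that once we apply the defining property to the pair $(x+y,\,x)$ (and separately to $(x+y,\,y)$), the \emph{same} automorphism $\varphi:=\varphi_{x+y,x}$ sends $x+y$ to $\Delta(x+y)$ and $x$ to $\Delta(x)$; being linear it then forces $\Delta(x+y)-\Delta(x)=\varphi(x+y)-\varphi(x)=\varphi(y)$. Thus $\Delta(x+y)-\Delta(x)$ is the image of $y$ under an automorphism, and symmetrically $\Delta(x+y)-\Delta(y)$ is an automorphic image of $x$. Combining these, the difference $D:=\Delta(x+y)-\Delta(x)-\Delta(y)$ satisfies $D=\varphi(y)-\Delta(y)$ for an automorphism $\varphi$, and one extracts norm and trace constraints: since $\varphi(y)$ and $\Delta(y)$ both have the norm and trace of $y$, and both are automorphic images of $y$, they are conjugate under $\Aut\cC$ whenever $y\notin\FF 1$. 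The goal is to pin down $D=0$ using these invariants.

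The main obstacle, and where I expect the real work to lie, is upgrading ``same norm and trace'' to genuine equality $\varphi(y)=\Delta(y)$ — that is, ruling out that $\Delta$ could differ from a fixed automorphism by a nontrivial conjugation depending on the auxiliary element $x$. I would handle this by playing off two auxiliary elements against each other: choose the pair $(x,y)$, obtain $\varphi_{x,y}$ with $\Delta(x)=\varphi_{x,y}(x)$ and $\Delta(y)=\varphi_{x,y}(y)$, and then feed the three points $x,y,x+y$ through overlapping pairs so that the interpolating automorphisms must agree on a generating set. Concretely, for generic $x,y$ the subalgebra they generate, together with the norm form, is enough to reconstruct $\Delta$ on $\espan\{x,y\}$ from the values $\Delta(x),\Delta(y)$ via the associative (alternative) multiplication, and the degree-$2$ relation \eqref{xsqn} links products to norms and traces that $\Delta$ already preserves. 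This forces $\Delta(x+y)=\Delta(x)+\Delta(y)$ first on a dense or generic set of pairs, and a continuity-free limiting argument — replacing limits by the polynomial identities satisfied by the norm — extends it to all pairs, including the degenerate cases where $x$, $y$, or $x+y$ lies in $\FF 1$, which must be treated separately using the already-established homogeneity and scalar-fixing.
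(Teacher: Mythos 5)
There is a genuine gap. Your treatment of homogeneity (via the pair $(x,\lambda x)$) is exactly right and matches the paper, but your route to additivity does not close. From the pair $(x+y,x)$ you correctly get $\Delta(x+y)-\Delta(x)=\varphi(y)$ for some automorphism $\varphi$, and you then need $\varphi(y)=\Delta(y)$. Knowing that $\varphi(y)$ and $\Delta(y)$ share the same norm and trace only makes them conjugate under $\Aut\cC$ (Lemma~\ref{le:orbits}), not equal, and you acknowledge this is the crux --- but the proposed resolution (``overlapping pairs agreeing on a generating set'', ``a dense or generic set of pairs'', ``a continuity-free limiting argument'') is never actually carried out and I do not see how to make it work: nothing in the $2$-local hypothesis forces the interpolating automorphisms for the pairs $(x+y,x)$ and $(x+y,y)$ to agree anywhere beyond the points at which they are prescribed.

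The missing idea is simpler and bypasses conjugacy entirely. The $2$-local property applied to an arbitrary pair $(u,v)$ gives not just preservation of norms and traces but preservation of the full polar form: $\nup\bigl(\Delta(u),\Delta(v)\bigr)=\nup(u,v)$ for \emph{all} $u,v$. Applying this with $u$ running over $x+y$, $x$, $y$ and $v=z$ arbitrary yields
\[
\nup\bigl(\Delta(x+y)-\Delta(x)-\Delta(y),\,\Delta(z)\bigr)=\nup(x+y,z)-\nup(x,z)-\nup(y,z)=0 ,
\]
and since the Gram matrix $\bigl(\nup(\Delta(x_i),\Delta(x_j))\bigr)=\bigl(\nup(x_i,x_j)\bigr)$ of the image of a basis is nondegenerate, $\Delta(\cC)$ spans $\cC$ and the nondegeneracy of $\nup$ forces $\Delta(x+y)=\Delta(x)+\Delta(y)$. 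You had all the ingredients (norm preservation, nondegeneracy) in hand but did not combine them; as written, the proposal's central step remains unproved.
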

\begin{proof}
Consider a $2$-local automorphism $\varphi\in\twoLocAut(\cC)$.
For any two elements $x,y\in\cC$, there is an automorphism
$\psi\in\Aut(\cC)$ such that $\varphi(x)=\psi(x)$, $\varphi(y)=\psi(y)$.
As any automorphism preserves the norm, we get
\begin{equation}\label{eq:2local_norm}
\nup\bigl(\varphi(x)\bigr)=\nup(x)\quad\text{and}\quad
\nup\bigl(\varphi(x),\varphi(y)\bigr)=\nup(x,y)
\end{equation}
for any $x,y\in\cC$.

Pick an arbitrary
basis $\{x_i\mid 1\leq i\leq 8\}$ of $\cC$. The matrix
$\Bigl(\nup\bigl(\varphi(x_i),\varphi(x_j)\bigr)\Bigr)
=\Bigl(\nup(x_i,x_j)\Bigr)$ is nondegenerate, and hence
$\{\varphi(x_i)\mid 1\leq i\leq 8\}$ is another basis of $\cC$. In particular, $\varphi(\cC)$ spans the whole $\cC$.

For $x,y,z\in\cC$, using \eqref{eq:2local_norm} we obtain
\begin{align*}
\nup\bigl(\varphi(x+y),\varphi(z)\bigr)
& =\nup(x+y,z)=\nup(x,z)+\nup(y,z)\\
& =\nup\bigl(\varphi(x),\varphi(z)\bigr)
     +\nup\bigl(\varphi(y),\varphi(z)\bigr)
=\nup\bigl(\varphi(x)+\varphi(y),\varphi(z)\bigr).
\end{align*}
Hence $\varphi(x+y)-\varphi(x)-\varphi(y)$ is orthogonal to all the
elements in $\varphi(\cC)$, and this spans the whole $\cC$.
The nondegeneracy of $\nup$ forces
$\varphi(x+y)=\varphi(x)+\varphi(y)$.

On the other hand, for any $x\in \cC$ and $\lambda\in\FF$, there is
an automorphism $\psi\in\Aut(\cC)$ such that $\varphi(x)=\psi(x)$
and $\varphi(\lambda x)=\psi(\lambda x)$, so that
$\varphi(\lambda x)=\psi(\lambda x)
=\lambda\psi(x)=\lambda\varphi(x)$, and we conclude that $\varphi$ is linear.
\end{proof}

In particular, any $2$-local automorphism of a Cayley algebra is a
local automorphism, and hence it belongs to
$\{\varphi\in\Ort(\cC,\nup)\mid \varphi(1)=1\}$.

The proof of the next lemma is straightforward. The similar result
for local automorphisms is valid too.

\begin{lemma}\label{le:auto2local}
Let $\varphi$ be a $2$-local automorphism and $\psi$ an automorphism
 of a nonassociative algebra $\cA$. Then $\psi\varphi$ and
$\varphi\psi$ are
$2$-local automorphisms too.
\end{lemma}

The situation in the split case is quite simple:

\begin{theorem}\label{th:2local_split}
Let $\cC$ be the split Cayley algebra over a field $\FF$ with norm
$\nup$. Then any $2$-local automorphism of $\cC$ is an automorphism:
$\twoLocAut(\cC)=\Aut(\cC)$.
\end{theorem}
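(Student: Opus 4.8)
My plan is to prove the nontrivial inclusion $\twoLocAut(\cC)\subseteq\Aut(\cC)$, the reverse being clear. Fix $\varphi\in\twoLocAut(\cC)$. By Lemma~\ref{le:2local_linear} and the remark after it, $\varphi$ is linear and lies in $\{\phi\in\Ort(\cC,\nup)\mid\phi(1)=1\}$; in particular it preserves trace and norm, hence the degree~$2$ relation~\eqref{xsqn}, so $\varphi(x^2)=\varphi(x)^2$ for all $x$ and $\varphi$ sends idempotents to idempotents. Every nontrivial idempotent has trace~$1$ and norm~$0$, so by Lemma~\ref{le:orbits} the idempotent $\varphi(e_1)$ is conjugate to $e_1$ under $\Aut(\cC)$. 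Using Lemma~\ref{le:auto2local} I would replace $\varphi$ by $\theta^{-1}\varphi$ for a suitable $\theta\in\Aut(\cC)$, reducing to the case $\varphi(e_1)=e_1$, and then $\varphi(e_2)=\varphi(1-e_1)=e_2$.

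With $e_1,e_2$ fixed I would determine the shape of $\varphi$ on $\cC=\cK\oplus\cU\oplus\cV$. Testing the $2$-local property on a pair $(e_1,u)$ with $u\in\cU$: the matching automorphism $\psi$ has $\psi(e_1)=\varphi(e_1)=e_1$, so, fixing $e_1$, it preserves $\cU=\{w\in\cC\mid e_1w=w,\ we_1=0\}$; hence $\varphi(u)=\psi(u)\in\cU$. Thus $\varphi(\cU)\subseteq\cU$, and likewise (using pairs $(e_2,v)$) $\varphi(\cV)\subseteq\cV$; as $\varphi$ is invertible and fixes $\cK$ these are equalities. Put $A:=\varphi|_{\cU}\in\GL_3(\FF)$. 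Since $\varphi$ is an isometry for the nondegenerate pairing $\cU\times\cV\to\FF$ it must act on $\cV$ as $(A^{\mathrm t})^{-1}$. A direct check on Table~\ref{table:1} shows such a $\varphi$ automatically respects the products $\cK\times\cU$, $\cK\times\cV$ (it fixes $e_1,e_2$) and $\cU\times\cV\to\cK$ (it is an isometry). Hence $\varphi\in\Aut(\cC)$ if and only if it respects the remaining product $\cU\times\cU\to\cV$; via the identity $Au\times Au'=\det(A)\,(A^{\mathrm t})^{-1}(u\times u')$ this collapses to the single scalar condition $\det A=1$.

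Everything thus reduces to proving $\det A=1$, and this is the only place where the $2$-local hypothesis is genuinely needed. The warning is that pairs compatible with the $\ZZ_3$-grading --- two vectors inside $\cU$, inside $\cV$, or aligned with the Peirce pieces --- are always matchable by an element of the copy of $\SL_3(\FF)$ fixing $(e_1,e_2)$, whatever $\det A$ is, since two vectors never pin down a determinant; such pairs carry no information. I would therefore test a pair that is skew to the grading, for instance $x=u_1$ and $y=u_2+v_3$. A matching automorphism $\psi$ then satisfies $\psi(u_1)=Au_1$ and $\psi(u_2+v_3)=Au_2+(A^{\mathrm t})^{-1}v_3$; since $u_1(u_2+v_3)=v_3$, multiplicativity forces $\psi(v_3)=\psi(u_1)\psi(u_2+v_3)=\det(A)\,(A^{\mathrm t})^{-1}v_3$, and subtracting gives $\psi(u_2)=Au_2+(1-\det A)\,(A^{\mathrm t})^{-1}v_3$.

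The hard part is to convert the mere \emph{existence} of such a $\psi$ into the equality $\det A=1$. Concretely, I would show that no automorphism of $\cC$ can carry $u_1,u_2,v_3$ to the values above while also respecting the rest of the multiplication table: the relations tying these elements to the others --- e.g.\ $u_2u_3=v_1$ together with $v_1^2=0$, which feed the spurious $\cV$-tail $(1-\det A)(A^{\mathrm t})^{-1}v_3$ of $\psi(u_2)$ into an idempotent component of $\psi(v_1)$ --- become inconsistent unless $1-\det A=0$. I expect this compatibility computation, carried out in the canonical basis of Table~\ref{table:1}, to be the single laborious step. Once $\det A=1$ is established, the criterion of the second paragraph gives $\varphi\in\Aut(\cC)$; undoing the reduction $\varphi\mapsto\theta^{-1}\varphi$ of the first paragraph then yields $\twoLocAut(\cC)=\Aut(\cC)$.
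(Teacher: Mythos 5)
Your first two paragraphs are sound and follow the same route as the paper: reduce to a $2$-local automorphism fixing $e_1,e_2$, show it preserves the Peirce components $\cU$ and $\cV$, act by a matrix $A$ on $\cU$ and by $(A^{\mathrm t})^{-1}$ on $\cV$, and reduce everything to the single condition $\det A=1$. The gap is in the last step, and it is not merely a missing computation: the test pair $(u_1,\,u_2+v_3)$ genuinely fails to detect $\det A$. The map $\theta_\mu$ defined on the canonical basis by
\[
e_1\mapsto e_1+\mu u_1,\quad e_2\mapsto e_2-\mu u_1,\quad u_1\mapsto u_1,\quad u_2\mapsto u_2+\mu v_3,\quad u_3\mapsto u_3-\mu v_2,
\]
\[
v_1\mapsto v_1+\mu(e_1-e_2)+\mu^2 u_1,\quad v_2\mapsto v_2,\quad v_3\mapsto v_3,
\]
is an automorphism of $\cC$ for every $\mu\in\FF$ (it is the root-group element $\exp(\mu d)$ for the derivation $d$ with $d(e_1)=u_1$; one checks the whole of Table~\ref{table:1}, e.g. $\theta_\mu(u_2)\theta_\mu(u_3)=(u_2+\mu v_3)(u_3-\mu v_2)=v_1+\mu(e_1-e_2)+\mu^2u_1=\theta_\mu(v_1)$). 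Taking $A=\mathrm{diag}(1,1,\lambda)$, your $\varphi$ sends $u_1\mapsto u_1$ and $u_2+v_3\mapsto u_2+\lambda^{-1}v_3$, and $\theta_\mu$ with $\mu=\lambda^{-1}-1$ matches $\varphi$ on both elements of your pair (the general case follows by composing with the $\mathrm{SL}_3$-automorphism attached to $\bar A$). So a matching automorphism exists for every value of $\det A$, and no contradiction can be extracted. The reason your heuristic breaks down is that the relation you derive, $\psi(v_3)=\det(A)(A^{\mathrm t})^{-1}v_3$, constrains $\psi$ at the point $v_3$, which is \emph{not} one of the two test points, so it need not agree with $\varphi(v_3)$; and the further propagation through $u_2u_3=v_1$ is impossible because $\psi(u_3)$ is completely unconstrained (for $\theta_\mu$ it is $u_3-\mu v_2$, which makes everything consistent).

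The paper avoids this trap by first normalizing $A$ to $\mathrm{diag}(1,1,\lambda)$ (composing with an automorphism of determinant-one type) and then choosing the pair $x=v_1-u_1$, $y=u_3+v_2$. The point of this choice is twofold: $x$ is \emph{fixed} by the normalized $2$-local map, so the matching automorphism $\tau$ satisfies $\tau(x)=x$ exactly, and $xy=y$, so that $\tau(y)=\tau(x)\tau(y)=x\,\tau(y)$ is an identity entirely in terms of the two prescribed values. Computing $(v_1-u_1)(\lambda u_3+v_2)=u_3+\lambda v_2$ and comparing with $\tau(y)=\lambda u_3+v_2$ forces $\lambda=1$. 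If you want to salvage your argument you must likewise choose a pair for which the multiplicative relation closes up on the two prescribed values alone; a relation that exits to a third, unprescribed element cannot yield a contradiction.
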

\begin{proof}
Consider a canonical basis $\{e_1,e_2,u_1,u_2,u_3,v_1,v_2,v_3\}$
as in Table~\ref{table:1}. Let $\varphi$ be a $2$-local automorphism
of $\cC$, and pick an automorphism $\psi$ such that
$\varphi(e_1)=\psi(e_1)$ and $\varphi(e_2)=\psi(e_2)$. Then the
$2$-local automorphism $\varphi'=\psi^{-1}\varphi$ fixes both $e_1$
and $e_2$.

The subspace $\cU=\FF u_1+\FF u_2+\FF u_3$ is the `Peirce component'
$\{u\in \cC\mid e_1u=u=ue_2\}$. For $u\in \cU$, let $\phi$ be an
automorphism such that $e_1=\varphi'(e_1)=\phi(e_1)$ and
$\varphi'(u)=\phi(u)$. Then $\phi(e_2)=\phi(1-e_1)=1-e_1=e_2$, and
hence $\phi$ preserves the Peirce component $\cU$. In particular
$\varphi'(\cU)=\cU$. In the same vein, the Peirce component
$\cV=\FF v_1+\FF v_2+\FF v_3=\{v\in \cV\mid e_2v=v=ve_1\}$ is
fixed too by $\varphi'$.

Since $\cU$ and $\cV$ are isotropic subspaces, $\{u_1,u_2,u_3\}$ and $\{v_1,v_2,v_3\}$ are dual bases relative to
$\nup$, if $A=\bigl(\alpha_{ij}\bigr)_{1\leq i,j\leq 3}$ is the coordinate matrix of
$\varphi'\vert_\cU$ in the basis $\{u_1,u_2,u_3\}$, then the coordinate
matrix of $\varphi'\vert_\cV$ in the basis $\{v_1,v_2,v_3\}$ is
$(A^t)^{-1}$. ($A^t$ denotes the transpose of $A$).

If $\det(A)=1$, then $\varphi'$ is an automorphism and we are done.

Otherwise,  if $\det(A)= \lambda\neq 1$, we can factor $A$ as
\[
A=\bar A \begin{pmatrix} 1&0&0\\ 0&1&0\\
 0&0&\lambda\end{pmatrix}
\]
for a matrix $\bar A$ of determinant $1$. If the  linear map $\psi$  fixes $e_1$, $e_2$ and the subspaces $\cU$ and $\cV$,
and such that the coordinate matrix of $\psi\vert_\cU$ in the above basis
is $\bar A^{-1}$ and the coordinate matrix of $\psi\vert_\cV$ is
$\bar A^t,$ then $\psi$  is an automorphism. Therefore  the $2$-local automorphism
$\varphi''=\psi\varphi'$ fixes the elements $e_1,e_2,u_1,u_2,v_1,v_2$
and sends $u_3\to \lambda u_3$, $v_3\to\lambda^{-1}v_3$.
Pick now the elements $x=v_1-u_1$ and $y=u_3+v_2$. Then $xy=y$
and $x$ is fixed by $\varphi''$. There is an automorphism
$\tau\in\Aut(\cC)$ such that $\tau(x)=\varphi''(x)=x$ and
$\tau(y)=\varphi''(y)=\lambda u_3+v_2$. But a simple computation gives
\[
\lambda u_3+v_2=\tau(y)=\tau(xy)=\tau(x)\tau(y)=x\tau(y)
=(v_1-u_1)(\lambda u_3+v_2)=u_3+\lambda v_2,
\]
and this is a contradiction as we have $\lambda \neq 1$.
\end{proof}

\medskip

We turn our attention now to the case of division Cayley algebras.
Here, as was the case for $2$-local derivations, the situation is different.

\begin{lemma}\label{le:2local_division_K}
Let $\cC$ be a division Cayley algebra over a field $\FF$ with norm
$\nup$. Let $\cK$ be a two-dimensional composition subalgebra of
$\cC$ (that is, $\nup\vert_\cK$ is regular), let $x$ be an arbitrary
element of $\cC$, and let $\varphi$ be a
local automorphism of $\cC$. Then there is an automorphism $\psi$
of $\cC$ such that $\psi\vert_\cK=\varphi\vert_\cK$ and
$\psi(x)=\varphi(x)$.
\end{lemma}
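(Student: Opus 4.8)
The plan is to reduce the problem, via two applications of the standard fact that any isomorphism between composition subalgebras of a Cayley algebra extends to an automorphism of the whole algebra (see, e.g., \cite{EKmon}), to matching $\varphi$ first along $\cK$ and then at the single extra point $x$.

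First I would record that, by Theorem~\ref{th:local_autos}, $\varphi\in\Ort(\cC,\nup)$ with $\varphi(1)=1$; in particular $\varphi$ preserves both the norm and the trace $\nup(\cdot,1)$. Writing $\cK=\FF 1\oplus\FF a$, the degree-$2$ identity~\eqref{xsqn} then gives $\varphi(a)^2=\nup(a,1)\varphi(a)-\nup(a)1=\varphi(a^2)$, so $\varphi$ restricts to an algebra isomorphism of $\cK$ onto the two-dimensional composition subalgebra $\varphi(\cK)$. Extending this isomorphism yields an automorphism $\psi_1\in\Aut(\cC)$ with $\psi_1\vert_\cK=\varphi\vert_\cK$.

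Next I would pass to $\theta:=\psi_1^{-1}\varphi$, which is again a local automorphism (the local automorphisms of $\cC$ form a group) and which now fixes $\cK$ pointwise; being orthogonal and fixing $\cK$, the map $\theta$ preserves $\cK^\perp$. Decompose $x=k+w$ with $k\in\cK$ and $w\in\cK^\perp$, so that $\theta(x)=k+\theta(w)$ and $\nup\bigl(\theta(w)\bigr)=\nup(w)$. If $w=0$ then $x\in\cK$ and $\psi:=\psi_1$ already works, so assume $w\neq0$; since $\cC$ is division, $\nup(w)\neq0$. Here $w$ is traceless (as $w\perp 1$), whence $w^2=-\nup(w)1$, and the associative, hence composition, subalgebra generated by $\cK$ and $w$ is the quaternion subalgebra $\cQ=\cK\oplus\cK w$; likewise $\cQ'=\cK\oplus\cK\theta(w)$ is quaternion. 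Because $\nup\bigl(\theta(w)\bigr)=\nup(w)$, the $\cK$-linear map fixing $\cK$ and sending $w\mapsto\theta(w)$ is an isomorphism $\cQ\to\cQ'$, which extends to an automorphism $\rho\in\Aut(\cC)$ fixing $\cK$ pointwise with $\rho(w)=\theta(w)$.

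Finally, $\psi:=\psi_1\rho$ is the desired automorphism: $\rho$ fixes $\cK$, so $\psi\vert_\cK=\psi_1\vert_\cK=\varphi\vert_\cK$; and $\rho(x)=k+\theta(w)=\theta(x)$, whence $\psi(x)=\psi_1\theta(x)=\varphi(x)$. The only nontrivial ingredient, and the main obstacle, is the extension principle for composition subalgebras, together with the verification that $\cK\oplus\cK w$ is genuinely a quaternion subalgebra whose isomorphism type over $\cK$ depends only on $\nup(w)$. Division is used precisely to guarantee $\nup(w)\neq0$, so that $\cQ$ and the target $\cQ'$ are regular and the gluing map $w\mapsto\theta(w)$ is an isometric isomorphism.
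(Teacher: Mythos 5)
Your proof is correct and follows essentially the same route as the paper's: reduce to a local automorphism $\theta$ fixing $\cK$ pointwise, split $x=k+w$ with $w\in\cK^\perp$, use the division hypothesis to get $\nup(w)\neq 0$, and produce an automorphism fixing $\cK$ and sending $w$ to $\theta(w)$ via the Cayley--Dickson doubling/extension machinery. The only cosmetic difference is that the paper achieves the first reduction by choosing an automorphism agreeing with $\varphi$ at a single generator $a$ of $\cK$ (straight from the definition of a local automorphism), whereas you invoke the extension theorem for composition subalgebras a second time; both are valid.
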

\begin{proof}
Recall that Theorem \ref{th:local_autos} gives
$\LocAut(\cC)=\{\varphi\in\Ort(\cC,\nup)\mid \varphi(1)=1\}$.

Pick an element $a\in\cK\setminus\FF 1$. There is an automorphism
$\phi\in\Aut(\cC)$ such that $\phi(a)=\varphi(a)$, and hence the
local automorphism $\varphi'=\phi^{-1}\varphi$ fixes $\cK$ elementwise.

Write $x=b+y$ with $b\in\cK$ and $y\in\cK^\perp$. If $y=0$, we are done as $\varphi$ and $\phi$ coincide on $\cK$ and hence also on $x$. Otherwise,
as $\cC$ is a division algebra $\nup(y)\neq 0$. Since $\varphi'$ is
an orthogonal transformation, $\varphi'(y)$ lies too in $\cK^\perp$, and
it has the same norm as $y$. As in the proof of
\cite[Theorem 1.7.1]{SpringerVeldkamp}, the Cayley-Dickson doubling process, together with Witt's Cancellation Theorem,
shows that there is an automorphism $\tau\in\Aut(\cC)$ which is the
identity on $\cK$ and such that $\tau(y)=\varphi'(y)$. Then
$\tau(x)=\tau(b+y)=b+\tau(y)=b+\varphi'(y)=\varphi'(x)
=\phi^{-1}\varphi(x)$. The automorphism $\psi=\phi\tau$ coincides
with $\varphi$ both on $\cK$ and on $x$.
\end{proof}

This result settles the situation easily if the characteristic
of the ground field is not $2$. Over fields of characteristic $2$, the problem is more difficult. Note that the only Cayley (or quaternion) algebra
over a perfect field of characteristic $2$ is the split one, so we are
forced to deal with
fields $\FF$ of characteristic $2$ with $\FF\neq \FF^2$.

\begin{theorem}\label{th:2localautos_division_not2}
Let $\cC$ be a division Cayley algebra over a field $\FF$. Then any local
automorphism is a $2$-local automorphism: $\LocAut(\cC)=\twoLocAut(\cC)$.
\end{theorem}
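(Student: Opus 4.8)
The plan is to show that any local automorphism $\varphi$ of a division Cayley algebra $\cC$ satisfies the defining property of a $2$-local automorphism: for every pair $x,y\in\cC$, I must produce a single automorphism $\psi$ with $\psi(x)=\varphi(x)$ and $\psi(y)=\varphi(y)$. The inclusion $\twoLocAut(\cC)\subseteq\LocAut(\cC)$ was already established after Lemma \ref{le:2local_linear}, so the whole content is the reverse inclusion $\LocAut(\cC)\subseteq\twoLocAut(\cC)$.

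First I would reduce the two arbitrary elements $x,y$ to a more tractable configuration using a composition subalgebra. The idea is to find a two-dimensional composition subalgebra $\cK$ of $\cC$ that contains enough information about $x$ and $y$ so that matching $\varphi$ on $\cK$ together with one extra element suffices. Concretely, if I can arrange that $y$ lies in the subalgebra generated by $x$ and $\cK$ in a controlled way, then Lemma \ref{le:2local_division_K} delivers an automorphism $\psi$ agreeing with $\varphi$ on both $\cK$ and a chosen element. The cleanest route is to pick $\cK$ so that it contains $1$ and one of the elements, say take $\cK=\FF 1+\FF x$ when $x\notin\FF 1$ (this is a two-dimensional composition subalgebra precisely because $\nup$ is anisotropic in the division case, so $\nup\vert_\cK$ is regular). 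Then applying Lemma \ref{le:2local_division_K} with this $\cK$ and with the element $y$ gives an automorphism $\psi$ with $\psi\vert_\cK=\varphi\vert_\cK$ (hence $\psi(x)=\varphi(x)$ since $x\in\cK$) and $\psi(y)=\varphi(y)$, which is exactly what a $2$-local automorphism requires.

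I would then handle the degenerate cases separately: if $x\in\FF 1$, then $\varphi(x)=x$ automatically (local automorphisms fix $1$), so I may take $\cK=\FF 1+\FF y$ and run the same argument with the roles reversed; if both $x$ and $y$ lie in $\FF 1$ there is nothing to prove since any automorphism fixes them. The only genuinely delicate point is the case where $x$ and $y$ together do not fit into a single two-dimensional composition subalgebra, which is generic; but this is precisely handled by Lemma \ref{le:2local_division_K}, whose statement allows $x$ (here played by $y$) to be a completely arbitrary element of $\cC$ while only $\cK$ is fixed elementwise. That asymmetry—$\cK$ fixed pointwise versus a single extra vector matched—is what makes the reduction work.

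The main obstacle I anticipate is verifying that $\cK=\FF 1+\FF x$ is genuinely a composition (regular) subalgebra in all characteristics, and that this choice interacts correctly with the norm constraints. In the division setting $\nup$ is anisotropic, so for $x\notin\FF1$ the restriction $\nup\vert_\cK$ cannot be degenerate (a nonzero isotropic vector would contradict anisotropy), which secures regularity even when $\chr\FF=2$; I would state this explicitly since the characteristic $2$ behavior of quadratic forms was flagged as the hard part earlier in the paper. Granting that, the theorem follows immediately from Lemma \ref{le:2local_division_K} with essentially no further computation, so the real work was front-loaded into that lemma and into Theorem \ref{th:local_autos}.
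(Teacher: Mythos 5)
Your reduction works whenever $\cK=\FF 1+\FF x$ (or $\FF 1+\FF y$) really is a two\-dimensional \emph{composition} subalgebra, and in characteristic $\neq 2$ that is always the case, so your argument correctly disposes of that situation exactly as the paper does. But the step you flagged as delicate is where the proof breaks: in characteristic $2$, anisotropy of $\nup$ does \emph{not} imply that $\nup\vert_{\FF 1+\FF x}$ is regular. Regularity of a two-dimensional subalgebra means nondegeneracy of the polar bilinear form on it, and in characteristic $2$ the polar form is alternating, so its Gram matrix on $\{1,x\}$ is $\bigl(\begin{smallmatrix}0 & \nup(1,x)\\ \nup(x,1) & 0\end{smallmatrix}\bigr)$. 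This is degenerate precisely when $x$ has trace zero, i.e.\ $\nup(x,1)=0$, which happens for a $7$-dimensional space of elements $x$, almost none of them in $\FF 1$. (A quadratic form can be anisotropic while its polar form vanishes identically on a subspace --- e.g.\ $\lambda^2+t\mu^2$ over $\FF_2(t)$ --- and division Cayley algebras in characteristic $2$ exist exactly over such imperfect fields.) So Lemma \ref{le:2local_division_K} simply does not apply when $\chr\FF=2$ and $\nup(x,1)=0=\nup(y,1)$, and your proof has no argument for that case.

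That residual case is where the paper does essentially all of its work. It splits further according to whether $\nup(x,y)\neq 0$ --- in which case $\FF 1+\FF xy$ \emph{is} a composition subalgebra (since $\nup(xy,1)=\nup(x,y)\neq 0$) and one builds a quaternion subalgebra $\cQ=\FF 1+\FF xy+\FF x+\FF y$ whose multiplication table is determined by $\nup(x)$, $\nup(y)$, $\nup(x,y)$, then extends the resulting isomorphism $\cQ\to\cQ'$ by Cayley--Dickson doubling --- or $\nup(x,y)=0$, in which case $\FF 1+\FF x+\FF y+\FF xy$ is totally isotropic for the polar form and one needs Witt's Extension Theorem to produce an isometry matching $\varphi$ on $1,x,y,xy$, followed by two rounds of Cayley--Dickson doubling through an auxiliary element $u$ of nonzero trace. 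None of this is recoverable from Lemma \ref{le:2local_division_K} alone, so the gap is genuine and substantial.
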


\begin{proof}
Let $\varphi$ be a local automorphism of $\cC$, and let $x,y\in\cC$.
If $x\in\FF 1$, then any automorphism $\psi$ such that
$\varphi(y)=\psi(y)$ satisfies $\varphi(x)=\psi(x)$ too. Therefore we
assume from now on that $x,y$ are not in $\FF 1$.

If the characteristic of $\FF$ is not $2$, $\cK=\FF 1+\FF x$ is a composition subalgebra of $\cC$, because
$\nup$ is anisotropic. Lemma \ref{le:2local_division_K} shows that there is an automorphism $\psi\in\Aut(\cC)$ such that $\varphi$ and
$\psi$ coincide on both $x$ and $y$. The same argument works
if the characteristic of $\FF$ is $2$ and $\nup(x,1)\neq 0$ (or $\nup(y,1)\neq 0$),
as this forces $\cK=\FF 1+\FF x$ to be a composition subalgebra.

Hence assume that $\chr\FF=2$ and $\nup(x,1)=0=\nup(y,1)$. In particular
$\bar x=-x=x$ and $\bar y=-y=y$. We may also assume that $1$, $x$ and $y$ are
linearly independent. We are left with two
different cases, depending on $\nup(x,y)$ being $0$ or not.
Write $\alpha=\nup(x)$,
$\beta=\nup(y)$, and $\gamma=\nup(x,y)$, and note that $\alpha\neq 0\neq\beta$, since $\nup$ is anisotropic, and $\nup(xy)=\alpha\beta$.

If $\nup(x,y)\neq 0$, then $\nup(xy,1)=\nup(x,\bar y)=\nup(x,y)\neq 0$, so that
$\cK=\FF 1+\FF xy$ is a composition subalgebra of $\cC$.  The multiplicative
property of $\nup$ gives $\nup(x,xy)=\nup(x)\nup(1,y)=0$, so $x$ is orthogonal
to $\cK$, and hence $\cQ=\cK\oplus\cK x$ is a quaternion subalgebra of $\cC$.
Besides, using that $xy+yx=x\bar y+y\bar x=\nup(x,y)1$, we obtain
\[
\begin{split}
x(xy)&=x^2y=\nup(x)y=\alpha y\\
(xy)x&=\bigl(\nup(x,y)1-yx)x=\gamma x-yx^2=\gamma x+\alpha y,
\end{split}
\]
so that $\{1,xy,x,y\}$ is a basis of $\cQ$, and the multiplication on $\cQ$ is completely determined in the following Table \ref{table:Q}.
\begin{table}[h!]
	\caption{}\label{table:Q}
	\vspace*{-6pt}\begin{tabular}{ | c | c c  c c |}
		\hline
		& $1$ & $xy$ & $x$ & $y$  \\
		\hline
		$1$ & $1$ & $xy$ & $x$ & $y$ \\
  $xy$ & $xy$ & $\gamma xy+\alpha\beta1$ & $\gamma x+\alpha y$ & $\beta x$ \\
  $x$ & $x$  & $\alpha y$ & $\alpha 1$ & $xy$  \\
  $y$ & $y$ & $\beta x+\gamma y$  & $\gamma 1+xy$ & $\beta 1$ \\
		\hline
	\end{tabular}
\end{table}

Since $\varphi$ is an orthogonal transformation that fixes $1$,
the elements $x'=\varphi(x)$ and $y'=\varphi(y)$ also satisfy the conditions
$\nup(x',1)=0=\nup(y',1)$, $\nup(x')=\alpha$, $\nup(y')=\beta$,
$\nup(x',y')=\gamma$, and hence the quaternion subalgebra
$\cQ'=\FF 1+\FF x'y'+\FF x'+\FF y'$ has the same multiplication table as in
Table \ref{table:Q}. Therefore there is an isomorphism $\psi:\cQ\rightarrow\cQ'$
with $\psi(x)=x'=\varphi(x)$ and $\psi(y)=y'=\varphi(y)$.
Besides, using the Cayley-Dickson doubling
process (as in \cite[Corollary 1.7.3]{SpringerVeldkamp}), $\psi$ extends to
an automorphism of $\cC$, and we are done in this case.

Finally, assume $\nup(x,y)=0$. In this case we have $xy=yx$ and
$\nup(xy,1)=\nup(x,y)=0$,
$\nup(xy,x)=\nup(x)\nup(y,1)=0$, and $\nup(xy,y)=\nup(x,1)\nup(y)=0$, so the
subspace $\cS=\FF 1+\FF x+\FF y+\FF xy$ satisfies $\nup(\cS,\cS)=0$, that is, $\cS$ is totally isotropic for the polar bilinear form. The dimension of $\cS$ is $4$, because otherwise we would have $xy=\delta 1+\mu x+\nu y$ for some scalars
$\delta,\mu,\nu\in\FF$, and this would give
$y=(x-\nu 1)^{-1}(\delta 1+\mu x)\in\FF 1+\FF x$, a contradiction with $1,x,y$
being linearly independent.
Since $\varphi$ is an orthogonal
transformation that fixes $1$, the same conditions hold for $x'=\varphi(x)$ and
$y'=\varphi(y).$ So that $\cS'=\FF 1+\FF x'+\FF y'+\FF x'y'$ is again totally isotropic for the polar form. Since $\nup(x)=\nup(x')$, $\nup(y)=\nup(y')$,
and $\nup(xy)=\nup(x)\nup(y)=\nup(x')\nup(y')=\nup(x'y')$, the subspaces $\cS$ and
$\cS'$ are isometric, relative to the quadratic form $\nup$, with an isometry sending
$1\to 1$, $x\to x'$, $y\to y'$, and $xy\to x'y'$. Witt's Extension Theorem
(see, for instance, \cite[Theorem 8.3]{EKM}) shows that there is an orthogonal
transformation $\sigma\in\Ort(\cC,\nup)$ such that $\sigma(1)=1$,
$\sigma(x)=x'=\varphi(x)$, $\sigma(y)=y'=\varphi(y)$, and
$\sigma(xy)=x'y'=\varphi(x)\varphi(y)$. Pick an element $u$ such that
$\nup(u,x)=\nup(u,y)=\nup(u,xy)=0$ and $\nup(u,1)=1$. This is always possible
because $1,x,y,xy$ are linearly independent and $\nup$ is nondegenerate.
Write $u'=\sigma(u)$ and consider the composition subalgebras $\cK=\FF 1+\FF u$ and
$\cK'=\FF 1+\FF u'$. The restriction of $\sigma$ to $\cK$ gives an algebra
isomorphism $\psi:\cK\rightarrow\cK'$. Note that $x$ is orthogonal to $\cK$, so that
$\cQ=\cK\oplus\cK x$ is a quaternion subalgebra and, in the same vein,
$\cQ'=\cK'\oplus\cK'x'$ is another quaternion subalgebra. The Cayley-Dickson
doubling process shows that $\psi$ can be extended to an isomorphism
$\cQ\rightarrow\cQ'$, also denoted by $\psi$, that satisfies
$\psi(x)=x'=\varphi(x)$. Finally, $y$ is orthogonal to $\cK$
and $\nup(\cK x,y)=\nup(\cK,y\bar x)=\nup(\cK,xy)=0$, so that $y$ is orthogonal to
$\cQ$, and hence we get $\cC=\cQ\oplus\cQ y$.
In  a similar way $\cC=\cQ'\oplus\cQ'y'$, and the isomorphism
$\psi:\cQ\rightarrow \cQ'$ can be extended to an automorphism of $\cC$ with
$\psi(y)=y'=\varphi(y)$.
\end{proof}

\medskip

\section{Open problems}

In this section we will restrict ourselves to the real or complex field, but similar arguments can be discussed over arbitrary fields, if one replaces Lie group by
affine algebraic group.

Let $\cA$ be a finite dimensional (not necessary associative) algebra over the field $\FF=\RR$ or $\CC$ and denote by $\Aut(\cA)$ the group of its automorphisms. Let $G$ be a Lie group and let $\textrm{Lie}(G)$ be the its tangent Lie algebra (see for details \cite{EKmon, H15}).

It is well-known that
$\textrm{Lie}(\Aut(\cA)) \cong \Der(\cA)$ (see \cite[Page 316]{EKmon}).

Note that in the case $\FF=\RR$ or $\CC,$ by Theorem~\ref{th:local_autos}, it follows that $\LocAut(\cC)= \{\varphi\in\Ort(\cC,\nup)\mid \varphi(1)=1\}$ is a Lie group, because $\{\varphi\in\Ort(\cC,\nup)\mid \varphi(1)=1\}$ is a closed subgroup in the Lie group $\Ort(\cC,\nup).$
In \cite[Theorem 3.1]{AEK} we have proved that the space of all local derivations of a Cayley algebra $\cC$ is the Lie algebra
$\{d\in\frso(\cC,\nup)\mid d(1)=0\}.$  Thus we obtain that
$\textrm{Lie}(\LocAut(\cC)) \cong \LocDer(\cC).$ These observations lead us formulate the following  problems:
\begin{problems}\label{LA}

\

\begin{enumerate}
\item Is $\LocAut\left(\cA\right)$ a Lie group?
\item Is	$\left(\LocDer\left(\cA\right), [\cdot, \cdot]\right)$ a Lie algebra?
\item If the above two assertions are true, are the Lie algebras 
$\textrm{Lie}\left(\LocAut(\cA)\right)$ and $\LocDer\left(\cA\right)$ isomorphic?
\end{enumerate}
\end{problems}

It is well known that the space of all derivations $\Der(\cA)$ is a Lie algebra with respect to the Lie bracket. At the same time, it is not clear whether the space of all local derivations $\LocDer(\cA)$ forms a Lie algebra.

As we have already noted  for Cayley algebras, Problems~\ref{LA} have a positive solution. Below we list some other classes of algebras for which Problems~\ref{LA} also have a positive solution:
\begin{itemize}
\item finite-dimensional complex simple Lie algebras \cite[Theorem 3.1]{AK16}, \cite[Theorem 3.14]{C19}. In fact, since
$
\LocAut(\mathfrak{g})=\Aut(\mathfrak{g})\ltimes \{\textrm{id}_\mathfrak{g}, -\textrm{id}_\mathfrak{g}\}
$ for a simple Lie algebra $\mathfrak{g},$ we have that
$$
\textrm{Lie}\left(\LocAut(\mathfrak{g})\right)=\textrm{Lie}\left(\Aut(\mathfrak{g})\right)\cong\Der\left(\mathfrak{g}\right)= \LocDer\left(\mathfrak{g}\right).
$$
\item complex  Leibniz  algebras of the form $\mathfrak{g}=\mathfrak{sl_n}\ltimes \cI,$ where $\mathfrak{sl_n}$ is the special linear Lie algebra  and $\cJ$ is the subspace generated by squares of
$\mathfrak{g}$  (see \cite[Theorem 20]{AKO20}, \cite[Theorem 4]{AK18}).
\item the algebra $NT(3,\FF)$ of lower niltriangular matrices of order $3$ over $\FF$ (see \cite[Theorem 2]{E11}).
\end{itemize}
In the  last two cases, as in the first one, the isomorphism $\textrm{Lie}\left(\LocAut(\cA)\right)\cong\LocDer\left(\cA\right)$ follows directly from the descriptions of $\LocAut(\cA)$ and $\LocDer\left(\cA\right).$

\bigskip

\end{document}